\newenvironment{thm}{\subsection{}{\textbf {Theorem.}}\em}{}
\newenvironment{cor}{\subsection{}{\textbf {Corollary.}}\em}{}
\newenvironment{lem}{\subsection{}{\textbf {Lemma.}}\em}{}
\newenvironment{defn}{\subsection{}{\textbf {Definition.}}\em}{\smallskip}
\newenvironment{eg}{\subsection{}{\textbf {Example.}}}{\smallskip}
\newenvironment{rem}{\subsection{}{\textbf {Remark.}}}{\smallskip}
\newcommand{\bC}{{\mathbb{C}}}
\newcommand{\bM}{{\mathbb{M}}}
\newcommand{\bN}{{\mathbb{N}}}
\newcommand{\bR}{{\mathbb{R}}}
  \newcommand{\A}{{\mathcal{A}}}
  \newcommand{\B}{{\mathcal{B}}}
  \newcommand{\C}{{\mathcal{C}}}
\renewcommand{\H}{{\mathcal{H}}}
  \newcommand{\K}{{\mathcal{K}}}
\renewcommand{\P}{{\mathcal{P}}}
\renewcommand{\phi}{\varphi}
\newcommand{\upchi}{{\raise.35ex\hbox{$\chi$}}}
\newcommand{\fA}{{\mathfrak{A}}}
\newcommand{\fB}{{\mathfrak{B}}}
\newcommand{\ran}{\operatorname{Ran}}
\newcommand{\rank}{\operatorname{rank}}
\newcommand{\tr}{\textup{tr}}
\newcommand{\beq}{\begin{eqnarray}}
\newcommand{\eneq}{\end{eqnarray}}
\begin{document}

\title{Operators which are polynomially isometric\\ to a normal operator}

\author[L.W. Marcoux]{Laurent W. Marcoux}
\address{Laurent W. Marcoux, Department of
Pure Mathematics\\ University of Waterloo\\ Waterloo, Ontario N2L 3G1\ CANADA}
\email{Laurent.Marcoux@uwaterloo.ca}

\author[Y. H. Zhang]{Yuanhang Zhang}
\address{Yuanhang Zhang, School of Mathematics\\Jilin University\\Changchun 130012\\P.R. CHINA}
\email{zhangyuanhang@jlu.edu.cn}

\thanks{L.W. Marcoux's research is supported in part by NSERC (Canada); Y.H. Zhang's research is supported by the Natural Science Foundation
for Young Scientists of Jilin Province (No.: 20190103028JH), 
 NNSF of China (No.: 11601104, 11671167, 11201171) and the China Scholarship Council.}

\begin{abstract}
Let $\H$ be a complex, separable Hilbert space and $\B(\H)$ denote the algebra of all bounded linear operators acting on $\H$.  Given a unitarily-invariant norm $\| \cdot \|_u$ on $\B(\H)$ and two linear operators $A$ and $B$ in $\B(\H)$, we shall say that $A$ and $B$ are \emph{polynomially isometric relative to} $\| \cdot \|_u$ if  $\| p(A) \|_u = \| p(B) \|_u$ for all polynomials $p$.  In this paper, we examine to what extent an operator $A$ being polynomially isometric to a normal operator $N$ implies that $A$ is itself normal.   More explicitly, we first show that if $\| \cdot \|_u$ is any unitarily-invariant norm on $\mathbb{M}_n(\bC)$, if $A, N \in \mathbb{M}_n(\mathbb{C})$ are polynomially isometric and $N$ is normal, then $A$ is normal.  We then extend this result to the infinite-dimensional setting by showing that if $A, N \in \B(\H)$ are polynomially isometric relative to the operator norm and $N$ is a normal operator whose spectrum neither disconnects the plane nor has interior, then $A$ is normal, while if the spectrum of $N$ is not of this form, then there always exists a non-normal operator $B$ such that $B$ and $N$ are polynomially isometric.  Finally, we show that if $A$ and $N$ are compact operators with $N$ normal, and if $A$ and $N$ are polynomially isometric with respect to the $(c,p)$-norm  studied by Chan, Li and Tu, then $A$ is again normal.

\end{abstract}
\subjclass[2010]{47B15, 15A60, 15A21}
\keywords{polynomially isometric, normal operators, unitarily-invariant norm, $(c,p)$-norm, singular values, Lavrentieff spectrum}

\date{\today}
\maketitle


\section{Introduction}
Let $\H$ be a complex, separable Hilbert space and denote by $\B(\H)$ the algebra of bounded linear operators acting on $\H$.  When $\H$ is finite-dimensional with $\mathrm{dim}\, \H = n < \infty$, we identify $\B(\H)$ with the algebra $\bM_n(\bC)$ of $n \times n$ complex matrices.   We denote by $\sigma(A)$ the spectrum of $A \in \B(\H)$.

Amongst the most studied and best understood class of operators in $\B(\H)$ is the set  of normal operators (i.e. those operators $N \in \B(\H)$ for which $N N^* = N^* N$).   There has been a great deal of work done to produce alternate conditions on a given $T \in \B(\H)$ which are either equivalent to $T$ being normal, or at least imply that $T$ is normal~\cite{BC2018, Ger2012}.   For example, it is easy to see that if $T \in \B(\H)$ and $\| T x \| = \| T^* x \|$ for all $x \in \H$, then $T$ is normal.  The articles~\cite{EI1998, GJSW1987} provide no fewer than $89$ such conditions in the matrix setting.

The present article adds yet another condition to this seemingly endless list.   Recall that a norm $\| \cdot \|_u$ on $\B(\H)$ is said to be \textbf{unitarily-invariant} if 
\[
\| X \|_u = \| U X \|_u = \| X V \|_u \]
for all $X \in \B(\H)$ and for all unitary operators $U, V \in \B(\H)$.   For example, the Frobenius and operator (i.e. spectral) norms are both unitarily-invariant norms on $\bM_n(\bC)$.

Given an operator $T \in \B(\H)$,  and given a unitarily-invariant norm $\| \cdot \|_u$ on $\B(\H)$, we are interested in determining to what extent the values of $\| p(T)\|_u$, $p$ a polynomial determine the normality of $T$.   To make this notion precise, we introduce the following definition.

%
%

\begin{defn} \label{sec1.02}
Let $A, B \in \B(\H)$,  and let $\|\cdot\|_u$ be a unitarily-invariant norm on $\B(\H)$.  We say that $A$ and $B$ are \textbf{polynomially isometric relative to} $\| \cdot \|_u$ if 
\[
\| p(A)\|_u = \| p(B) \|_u \mbox{ \ \ \ \ \ for all polynomials } p. \]
\end{defn}

We remark that when $\H$ is finite-dimensional with $\dim \H = n <\infty$, it follows from  the Cayley-Hamilton Theorem that one need only consider polynomials of degree at most $n-1$.  We should mention that variants of this notion have at times been referred to by saying that ``$A$ and $B$ have the same norm behaviour".  We feel that the current terminology is more precise.

%
%

\subsection{} \label{sec1.03}
The underlying theme of our investigation is that if $A$ and $B$ are linear operators which are polynomially isometric with respect to a given norm, then $A$ and $B$  share many common properties.   For example, in the setting of matrices, it is easily seen that if $A, B \in \bM_n(\bC)$ are polynomially isometric relative to a unitarily-invariant norm $\| \cdot\|_u$, then  $A$ and $B$ share a common minimal polynomial, and in particular, $\sigma(A) = \sigma(B)$.
In the case of the Frobenius and operator (i.e. spectral) norms, Greenbaum and Trefethen~\cite{GT1993}  studied the relationship between polynomial isometry and resolvent growth for a pair $A, B \in \bM_n(\bC)$, showing in particular that if $A$ and $B$ are polynomially isometric relative to any matrix norm, then $A$ and $B$ share common pseudospectrum.  Later, Viswanath and Trefethen~\cite{VisTre96} showed that if $A \in \bM_n(\bC)$ is unitarily equivalent to $A_1 \oplus A_2$ where $\sigma(A_1) \cap \sigma(A_2)= \varnothing$, and if $B \in \bM_n(\bC)$ is polynomially isometric to $A$, then $B$ also admits a direct sum decomposition $B = B_1 \oplus B_2$.
As yet another example, in~\cite[Theorem 2.1]{FFGSS11}, \cite[Theorem 2.1]{FGS11}, it was shown that if $A \in \bM_n(\bC)$ is an upper triangular Toeplitz matrix with nonzero superdiagonal and  $B \in \bM_n(\bC)$ is polynomially isometric to $A$ with respect to either the Frobenius or operator norm, then $A$ and $B$ are unitarily equivalent.

The main question we study is the following.

\bigskip

\noindent{\bf Question:}   Let $A, N \in \B(\H)$ be two operators with $N$ normal, and suppose that $\B(\H)$ is equipped with a unitarily-invariant norm $\| \cdot \|_u$.   If $A$ and $N$ are polynomially isometric with respect to $\| \cdot \|_u$, must $A$ be normal?

\bigskip

In the matrix setting, some positive answers have already been obtained.  For $\bM_n(\bC)$  equipped with the Frobenius norm, a positive answer to this question was given in \cite{Ger2012},  while the case of $\bM_n(\bC)$ equipped with the usual operator norm was subsequently handled in \cite{BC2018}. 

In Section 2 below, we show that for \emph{any} unitarily-invariant norm $\|\cdot\|_u$ on $\bM_n(\bC)$, if $A \in \bM_n(\bC)$ is polynomially isometric to a normal matrix $N$, then $A$ is normal.  Furthermore, if the norm is  able to distinguish between projections of different rank, then $A$ and $N$ must in fact be unitarily equivalent.

From Section~\ref{sec03} on, we assume that  $\H$ is an infinite-dimensional, complex, separable Hilbert space.
The most important unitarily-invariant norm in the infinite-dimensional setting is the usual operator norm $\| \cdot \|$ on $\B(\H)$.   In Section~\ref{sec03} we consider the case where $A, N \in \B(\H)$ are polynomially isometric relative to this norm, and $N$ is normal.   A compact subset $K \subseteq \bC$ is said to be \textbf{Lavrientiev} if $\bC \setminus K$ is connected, and if the interior $K^o$ of $K$ is empty.  We prove that if $\sigma(N)$ is Lavrentieff, then $A$ is normal, while if $\sigma(N)$ is not Lavrentieff, then there always exists a non-normal operator $B$ such that $B$ and $N$ are polynomially isometric relative to the operator norm.

In Section~\ref{sec04}, we extend above result to compact operators $A$ and $N$ (with $N$ normal) which are polynomially isometric with respect to the $(c,p)$-norms introduced by Chan, Li and Tu in~\cite{CLT2001}.   Since the operator norm on $\B(\H)$ is an example of a $(c,p)$-norm, we see from above that the result does not extend to all pairs of operators $A$ and $N$ in $\B(\H)$ with $N$ normal.

We conclude with a few remarks concerning polynomial isometry when restricted to polynomials without constant term.


\section{Unitarly-invariant norms on $\bM_n(\bC)$}

%
%

We first deal with the case where $\H$ is finite-dimensional.  The reader is referred to \cite{Fan51,Mir60,LiTsi87} for more information about the relationship between unitarily-invariant norms and symmetric gauge functions.

%
%

\begin{defn}
Let $x=(x_1,\cdots,x_n), y=(y_1,\cdots, y_n)\in \bR^n$.  We shall write  $x\leq y$ to mean that $x_i\leq y_i(i=1,\cdots,n).$ 
Let us also denote by $\tilde{x}_1\,\cdots, \tilde{x}_n$ the numbers $x_1,\cdots, x_n$ arranged in \emph{non-ascending}
order of magnitude, and let $\tilde{y}_1\cdots, \tilde{y}_n$ be defined analogously. If
\[\tilde{x}_1+\cdots+\tilde{x}_k\leq \tilde{y}_1+\cdots+\tilde{y}_k\ \ \ \ \ \ \ (k=1,...,n), \]
we shall write $x\preccurlyeq y$.
\end{defn}
%
%

Next, we need to recall the following fact which will be used later.

\begin{lem}\label{comparison}
Let $p,q,k\in \bN$, and $0<r_1<\cdots<r_k$ be positive real numbers.
Set
\[x=(\overbrace{0,\cdots,0}^{p};\overbrace{1,\cdots,1}^{q}; \overbrace{1,\cdots,1}^{k}),~~~~ y=(\overbrace{0,\cdots,0}^{p};\overbrace{1,\cdots,1}^{q}; 1+r_1, \cdots, 1+r_k),\]
Then for every symmetric gauge function $\Phi$ in $p+q+k$ variables, we have that 
\[\Phi(x)<\Phi(y).\]
\end{lem}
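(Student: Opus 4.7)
The plan is to first sort both vectors in non-increasing order. The sorted $\tilde{x}$ is $(\underbrace{1,\ldots,1}_{q+k},\underbrace{0,\ldots,0}_{p})$, and
\[
\tilde{y} = \bigl(1+r_k,\, 1+r_{k-1},\, \ldots,\, 1+r_1,\, \underbrace{1,\ldots,1}_{q},\, \underbrace{0,\ldots,0}_{p}\bigr),
\]
so $\tilde{y}_i \geq \tilde{x}_i$ for every $i$, with strict inequality precisely for $1 \leq i \leq k$. Writing $S_j^y := \tilde{y}_1+\cdots+\tilde{y}_j$ and analogously $S_j^x$, this gives
\[
S_j^y - S_j^x \;=\; \sum_{i=1}^{\min(j,k)} r_{k-i+1} \;>\; 0
\]
for every $j=1,\ldots,p+q+k$. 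The classical Ky Fan dominance theorem for symmetric gauge functions already delivers $\Phi(x) \leq \Phi(y)$, so the real task is to upgrade this to strict inequality.

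For the upgrade I would invoke duality. The dual $\Phi^*$ is again a symmetric gauge function, so by Hahn-Banach, combined with the absolute and permutation-invariant nature of $\Phi$ and $\Phi^*$ and the rearrangement inequality used to align the maximizer with $\tilde{x}$, one can choose $c = (c_1, \ldots, c_{p+q+k})$ satisfying $c_1 \geq \cdots \geq c_{p+q+k} \geq 0$, $\Phi^*(c) \leq 1$, and $\Phi(x) = \sum_i c_i \tilde{x}_i$. Using this same $c$ as a trial functional against $\tilde{y}$ yields $\Phi(y) \geq \sum_i c_i \tilde{y}_i$, and with the convention $c_{p+q+k+1} := 0$, Abel summation converts the difference into
\[
\Phi(y) - \Phi(x) \;\geq\; \sum_{i=1}^{p+q+k} (c_i - c_{i+1})(S_i^y - S_i^x).
\]

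Every factor on the right is non-negative, every $S_i^y - S_i^x$ is strictly positive by the first paragraph, and the telescoping identity $\sum_i(c_i - c_{i+1}) = c_1$ guarantees that at least one $c_i - c_{i+1}$ is strictly positive provided $c_1 > 0$. But $x \neq 0$ forces $\Phi(x) > 0$, hence $c \neq 0$, whence the top entry $c_1$ of the non-increasing non-negative vector $c$ is positive. The Abel sum is therefore strictly positive, giving $\Phi(y) > \Phi(x)$.

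The main obstacle I anticipate is the setup of the duality step, specifically justifying that the extremal functional $c$ attaining $\Phi(x)$ may be taken non-negative and non-increasing. This is standard but is precisely the ingredient that lets the Abel-summation bookkeeping convert the strict partial-sum dominance into strict inequality of $\Phi$; without a maximizer sorted in the same order as $\tilde{x}$, the bookkeeping would be considerably messier.
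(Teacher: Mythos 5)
Your argument is correct, but it takes a genuinely different route from the paper. The paper's proof manufactures an intermediate vector $\hat{y}$ obtained from $y$ by shaving $r_1/2$ off the entry $1+r_1$ and redistributing it as $r_1/(2q)$ over the $q$ entries equal to $1$; one checks $\hat{y}\preccurlyeq y$ and that $\hat{y}$ dominates $z=(1+\tfrac{r_1}{2q})x$ entrywise, so Fan's dominance theorem together with monotonicity and homogeneity of $\Phi$ gives $\Phi(y)\ge\Phi(\hat{y})\ge(1+\tfrac{r_1}{2q})\Phi(x)>\Phi(x)$. Strictness there comes entirely from the explicit scalar factor, and as a bonus the paper's route yields the quantitative bound $\Phi(y)\ge(1+\tfrac{r_1}{2q})\Phi(x)$. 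You instead prove a strict version of the dominance principle directly: since every partial sum of $\tilde{y}$ strictly exceeds the corresponding partial sum of $\tilde{x}$, a dual extremal functional $c$ for $\tilde{x}$ (which may indeed be taken non-negative and non-increasing, by permutation- and sign-invariance of $\Phi^*$ plus the rearrangement inequality) combined with Abel summation forces $\Phi(y)-\Phi(x)\ge\sum_i(c_i-c_{i+1})(S_i^y-S_i^x)>0$, the last step using $c_1>0$, which follows from $\Phi(x)>0$. Your approach is more general --- it shows that strict dominance of \emph{all} partial sums of non-negative non-increasing vectors forces strict inequality for every symmetric gauge function, with no use of the particular shape of $x$ and $y$ --- at the cost of invoking duality for symmetric gauge functions rather than only the Fan--Mirsky monotonicity theorem the paper cites; your preliminary appeal to the non-strict Ky Fan dominance theorem is actually redundant, since the duality-plus-Abel-summation computation already delivers the full strict inequality on its own.
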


\begin{proof}
Set
\[\hat{y}=(\overbrace{0,\cdots,0}^{p};\overbrace{1+\frac{r_1}{2q},\cdots,1+\frac{r_1}{2q}}^{q}; 1+\frac{r_1}{2}, ,1+r_2, \cdots, 1+r_k).\]
Then
\[\hat{y}\preccurlyeq y,\]
and
\[\hat{y}\geq z:= (\overbrace{0,\cdots,0}^{p};\overbrace{1+\frac{r_1}{2q},\cdots,1+\frac{r_1}{2q}}^{q}; \overbrace{1+\frac{r_1}{2q},\cdots,1+\frac{r_1}{2q}}^{k}).\]
Therefore, by~\cite[Theorem 4]{Fan51} (also see~\cite[Theorem 1]{Mir60}), for a fixed symmetric gauge function $\Phi$ in  $p+q+k$ variables,
\[\Phi(y)\geq \Phi(\hat{y})\geq \Phi(z)=(1+\frac{r_1}{2q})\Phi(x)>\Phi(x).\]
\end{proof}

%
%

For convenience, we formulate a variant of \cite[Proposition 1.3]{VisTre96} as the next lemma, which will prove very useful.

\begin{lem}\label{projection}
Let $P\in \bM_n(\bC)$ be a projection, $Q\in \bM_n(\bC)$ be an idempotent, and $\|\cdot\|_u$ be a unitarily-invariant norm of $\bM_n(\bC)$. If $\|P\|_u=\|Q\|_u$ and $\|I_n-P\|_u=\|I_n-Q\|_u$, then $Q$ is a projection.
\end{lem}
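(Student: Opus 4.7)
\smallskip\noindent
\textbf{Proof proposal.} The plan is to combine the singular-value structure of idempotents with Lemma~\ref{comparison} and the Ky Fan correspondence between unitarily-invariant norms on $\bM_n(\bC)$ and symmetric gauge functions $\Phi$ acting on singular-value sequences. The key structural input is that every idempotent $Q\in\bM_n(\bC)$ of rank $r$ is unitarily equivalent to a direct sum of the form $I_{r-k}\oplus 0_{n-r-k}\oplus\bigoplus_{i=1}^{k}\bigl(\begin{smallmatrix}1 & t_i\\ 0 & 0\end{smallmatrix}\bigr)$ for suitable $t_i>0$, as follows from the CS decomposition applied to the pair of subspaces $\ran(Q)$ and $\ker(Q)$. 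Setting $s_i:=\sqrt{1+t_i^2}>1$, the singular-value sequences of $Q$ and $I-Q$ (in non-ascending order) are therefore
\[
(s_1,\ldots,s_k,\underbrace{1,\ldots,1}_{r-k},\underbrace{0,\ldots,0}_{n-r})\qand (s_1,\ldots,s_k,\underbrace{1,\ldots,1}_{n-r-k},\underbrace{0,\ldots,0}_{r}),
\]
sharing the same $s_i$'s; in particular $Q$ is an orthogonal projection precisely when $k=0$. Writing $p=\rank(P)$, the sequences for $P$ and $I-P$ are simply $(1^{(p)},0^{(n-p)})$ and $(1^{(n-p)},0^{(p)})$.

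Suppose, for contradiction, that $k\geq 1$. Translating the hypotheses $\|P\|_u=\|Q\|_u$ and $\|I-P\|_u=\|I-Q\|_u$ into equalities between values of $\Phi$, and invoking Lemma~\ref{comparison} on each of them, I would obtain the strict inequalities
\[
\Phi(1^{(p)},0^{(n-p)})>\Phi(1^{(r)},0^{(n-r)})\qand\Phi(1^{(n-p)},0^{(p)})>\Phi(1^{(n-r)},0^{(r)}).
\]
Since symmetric gauge functions are monotone, the function $j\mapsto\Phi(1^{(j)},0^{(n-j)})$ is non-decreasing in $j$; the first inequality then forces $p>r$ and the second $n-p>n-r$, i.e.\ $r>p$---a contradiction. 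Hence $k=0$, so every singular value of $Q$ lies in $\{0,1\}$, giving $\|Q\|\leq 1$ in the operator norm. A classical short argument (for $x\in\ran(Q)$ and $y\in\ker(Q)$, the inequality $\|Q(x+ty)\|\leq\|x+ty\|$ forces $\langle x,y\rangle=0$ as $t\to 0$) then shows $\ran(Q)\perp\ker(Q)$, so $Q$ is an orthogonal projection.

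The main obstacle is the singular-value pairing between an idempotent and its complement---specifically that the singular values of $Q$ strictly exceeding $1$ coincide with those of $I-Q$ with multiplicity. Once this structural fact is in place, the majorization bookkeeping via Lemma~\ref{comparison} together with the monotonicity of projection norms under a symmetric gauge function is essentially automatic.
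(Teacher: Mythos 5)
Your proof is correct, and it runs on the same engine as the paper's --- Lemma~\ref{comparison} plus the von Neumann/Ky Fan correspondence between unitarily-invariant norms and symmetric gauge functions --- but it organizes the argument differently. The paper does not invoke the CS decomposition: it simply writes $Q=\begin{sbmatrix} I_s& Y\\ 0&0_{n-s}\end{sbmatrix}$ relative to $\ran Q\oplus(\ran Q)^\perp$, observes that either $s\geq \rank P$ or $n-s>n-\rank P$, and in the first case applies Lemma~\ref{comparison} to the singular values of $Q$ (read off from $QQ^*=(I_s+YY^*)\oplus 0$) to get $\|Q\|_u>\|P\|_u$ when $Y\neq 0$, while in the second case it runs the same comparison on $I_n-Q$; the conclusion is directly $Y=0$, so $Q$ is a projection with no further step needed. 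Your route instead extracts the full canonical form of an idempotent, uses the resulting explicit pairing of the singular values of $Q$ and $I_n-Q$ exceeding $1$ to derive \emph{both} strict inequalities at once, and gets the contradiction $p>r$ and $p<r$ from monotonicity of $j\mapsto\Phi(1^{(j)},0^{(n-j)})$; you then need the extra (standard) step that $k=0$ forces $Q$ to be an orthogonal projection --- though at that point your canonical form already exhibits $Q$ as unitarily equivalent to $I_r\oplus 0$, so the $\|Q\|\leq 1$ argument is not really needed. Your version buys a cleaner symmetry between the two hypotheses and makes explicit why both norm conditions are needed (as Remark~\ref{rem5.025} later emphasizes); the paper's version is leaner, avoiding the CS form entirely. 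One shared imprecision, not specific to you: both applications of Lemma~\ref{comparison} implicitly allow repeated values among the $r_i$ and possibly $q=0$, which the lemma's statement formally excludes, though its proof adapts trivially.
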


\begin{proof}
Suppose that $\rank(P)=r$, $\rank(Q)=s$.
If $s=0$, then $Q=0$, while
if $s=n$, then $\rank(I_n-Q)=0$, that is, $Q=I_n$.

We therefore assume that $1\leq s\leq n-1$.
Relative to the decomposition $\bC^n = \mathrm{ran}\, Q \oplus (\mathrm{ran}\, Q)^\perp$, we may write
\[Q=
\begin{bmatrix}
I_s& Y\\
0&0_{n-s}
\end{bmatrix},
\]
Note that either $s\geq r$ or $n-s > n-r$.
\begin{enumerate}
\item When $s\geq r$, we note that since the singular values of $Q$ are same (counting multiplicity) as the singular values of $Q^*$,
it follows that $\|Q^*\|_u=\|Q\|_u$. If $Y\neq 0$, then -- by Lemma~\ref{comparison} and the one-to-one  correspondence  between unitarily-invariant norms and symmetric gauge functions due to Von Neumann (see~\cite[Theorem 1.1]{LiTsi87}) --  $\|Q\|_u>\|P\|_u$.

\item When $n-s > n-r$, we consider
\[I_n-Q=
\begin{bmatrix}
0& -Y\\
0&I_{n-s}
\end{bmatrix}.\]

In an argument similar to the one above, $\|I_n-Q\|_u>\|I_n-P\|_u$ by Lemma~\ref{comparison} unless $Y = 0$.
\end{enumerate}
Therefore, $Y=0$, and so $Q$ is a projection.
\end{proof}

%
%

Suppose that $P$ and $Q \in \bM_n(\bC)$ are two projections of equal rank.  Clearly $P$ and $Q$ are unitarily similar, and thus if $\| \cdot\|_u$ is any unitarily-invariant norm on $\bM_n(\bC)$, then $\| P \|_u = \| Q\|_u$.   In general, however, it often happens that $\| P\|_u = \| Q\|_u$ even if the ranks of $P$ and $Q$ differ;  one need only consider the operator norm on $\bM_n(\bC)$.   Let us agree to say that a unitarily-invariant norm $\| \cdot \|_u$ \textbf{separates projections by rank} if, whenever $P$ and $Q$ are projections and $\mathrm{rank}\, P \ne \mathrm{rank}\, Q$,  then $\| P \|_u \ne \| Q \|_u$.   This happens for a large family of unitarily-invariant norms, including, for example, the $p$-norms 
\[
\| T \|_p := \left( \sum_{k=1}^n s_k(T)^p \right)^{\frac{1}{p}}, \ \ \ \ T \in \bM_n(\bC), \]
where $s_1(T) \ge s_2(T) \ge \cdots s_n(T) \ge 0$ are the singular numbers of $T$.  (See Section~\ref{sec04} for the definition of the singular numbers of $T \in \B(\H)$.)

The following theorem generalises the equivalence of conditions (i) and (vii) in the main Theorem of~\cite{Ger2012} as well as~\cite[Corollary~1]{BC2018}, and it adds yet one more equivalence to the 89 other characterisations
of normal matrices that appear in~\cite{GJSW1987} and~\cite{EI1998}.

\begin{thm} \label{thm2.04}
Let $n\in \bN$ and $\|\cdot\|_u$ be a unitarily-invariant norm on
$\bM_n(\bC)$. Let $N$ be an $n\times n$ normal complex matrix and $A\in \bM_n(\bC)$.
Suppose that $A$ and $N$ are polynomially isometric relative to $\| \cdot \|_u$.  
Then $A$ is normal.

If, furthermore, we suppose that $\| \cdot \|_u$ separates projections by rank, then $A$ and $N$ are unitarily similar.
\end{thm}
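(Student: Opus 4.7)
The plan is to first use polynomial isometry to force $A$ to be diagonalisable with the same spectrum as $N$, then to employ the Lagrange interpolating polynomials to build spectral idempotents $E_j$ for $A$, and finally to use Lemma~2.03 to promote these idempotents to orthogonal projections, from which normality of $A$ follows at once.

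In detail, I would proceed as follows. Since $\|\cdot\|_u$ is a norm, the hypothesis $\|p(A)\|_u = \|p(N)\|_u$ implies $p(A) = 0$ iff $p(N) = 0$. Hence $A$ and $N$ share the same minimal polynomial $m$, and because $N$ is normal, $m(x) = \prod_{j=1}^k (x - \lambda_j)$ with the $\lambda_j$ distinct. In particular $A$ is diagonalisable and $\sigma(A) = \sigma(N) = \{\lambda_1,\ldots,\lambda_k\}$. Let $p_j \in \bC[x]$ be the Lagrange polynomials satisfying $p_j(\lambda_i) = \delta_{ij}$, and set $E_j := p_j(A)$ and $F_j := p_j(N)$. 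Since $p_j^2 - p_j$ and $p_ip_j$ $(i\ne j)$ and $\sum_j p_j - 1$ all vanish on $\sigma(A) = \sigma(N)$, they are annihilated by $A$ (and by $N$), so the $E_j$ are idempotents satisfying $E_i E_j = 0$ for $i \ne j$ and $\sum_j E_j = I$. Similarly the $F_j$ are idempotents, but because $N$ is normal they are in fact the orthogonal spectral projections of $N$.

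The crucial step is then to apply Lemma~\ref{projection}. Polynomial isometry gives $\|E_j\|_u = \|F_j\|_u$, and applying it to the polynomial $1 - p_j$ gives
\[
\|I - E_j\|_u = \|(1 - p_j)(A)\|_u = \|(1 - p_j)(N)\|_u = \|I - F_j\|_u.
\]
Since $F_j$ is a projection, Lemma~\ref{projection} forces $E_j$ to be an orthogonal projection. Combined with $E_iE_j = 0$ for $i\ne j$ and $\sum E_j = I$, this exhibits $A = \sum_{j=1}^k \lambda_j E_j$ as a linear combination of mutually orthogonal self-adjoint projections summing to the identity, which is the spectral decomposition of a normal matrix. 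For the second assertion, if $\|\cdot\|_u$ separates projections by rank, then $\|E_j\|_u = \|F_j\|_u$ forces $\rank(E_j) = \rank(F_j)$, so the multiplicity of each eigenvalue $\lambda_j$ is the same for $A$ and $N$; since both are unitarily diagonalisable with identical eigenvalue data, they are unitarily similar.

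There is no deep obstacle once Lemma~\ref{projection} is in hand; the only conceptual point is to bridge the analytic hypothesis of polynomial isometry with the algebraic structure of the spectral idempotents. The essential observation is that $I - p_j(A) = (1 - p_j)(A)$, so the constant term can be absorbed into a polynomial and the second hypothesis of Lemma~\ref{projection} becomes automatic.
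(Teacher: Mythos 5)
Your proposal is correct and follows essentially the same route as the paper: identify the common minimal polynomial, build the Lagrange spectral idempotents, verify the two norm equalities needed for Lemma~\ref{projection} to upgrade the idempotents $p_j(A)$ to orthogonal projections, and conclude via the spectral decomposition $A=\sum_j\lambda_j p_j(A)$ (which the paper obtains by applying polynomial isometry to $z-\sum_i\lambda_i p_i(z)$, and you obtain from diagonalisability --- an immaterial difference). The rank argument for the unitary similarity statement also matches the paper's.
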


\begin{proof}
Let $\mu_N$ be the minimal polynomial of $N$.   Then $\sigma(N)=\{\lambda: \mu_N(\lambda)=0\}$.
Since $\mu_N(N)=0$, it follows that $\mu_N(A)=0$, and thus 
\[\mu_N(\sigma(A))=\sigma(\mu_N(A))=\{0\}.\]
In particular, $\sigma(A)\subset \sigma(N)$. By symmetry,
\[\sigma(N)=\sigma(A):= \{\lambda_1, \cdots, \lambda_m\}.\]

If $m=1$, then $N=\lambda I$, and since
\[\|A-\lambda I\|=\|N-\lambda I\|=0,\]
we have $A=\lambda I$.

Next, suppose that $m\geq 2$.
Define
\[p_i(z)=\prod_{1\leq j\leq m, j\neq i}\frac{z-\lambda_j}{\lambda_i-\lambda_j},~~1\leq i\leq m.\]
Set
\[P_i:= p_i(N),~~Q_i:= p_i(A),\]
it follows that
\[\|P_i\|_u=\|Q_i\|_u, \mbox{ and } ~~\|I_n-P_i\|_u=\|I_n-Q_i\|_u.\]

From the basic properties of the continuous functional calculus for normal matrices, we find
\begin{itemize}
	\item{} 
	$P_i$ is a projection; that is $P_i^2-P_i=0$ and $P_i=P_i^*$, $1\leq i\leq m$;  indeed, $P_i$ is the spectral projection for $N$ corresponding to the eigenvalue $\lambda_i$;
	\item{} 
	$P_iP_j=0$, $1\leq i\neq j\leq m$;
	\item{} 
	$\sum_{i=1}^m P_i-I=0$;
\end{itemize}

Therefore,
\begin{itemize}
	\item
	$\|Q_i-Q_i^2\|_u=\|P_i-P_i^2\|_u=0$, so that $Q_i$ is an idempotent, $1\leq i\leq m$;   in fact, the condition above that $\| P_i \|_u = \|Q_i \|_u$ and $\| I_n - P_i\|_i = \| I_n - Q_i \|_u$, combined with Lemma~\ref{projection}, yields that each $Q_i$ is a projection.   
	\item{}
	 $\|Q_iQ_j\|_u=\|P_iP_j\|_u=0$, and thus $Q_iQ_j=0$ $1\leq i\neq j\leq m$;
	\item{}
	$\|\sum_{i=1}^m Q_i-I\|_u=\|\sum_{i=1}^m P_i-I\|_u=0$, and therefore $\sum_{i=1}^m Q_i=I$.
\end{itemize}

Set \[p(z):= z-\sum_{i=1}^m \lambda_i \left( \prod_{1\leq j\leq m, j\neq i}\frac{z-\lambda_j}{\lambda_i-\lambda_j}\right).\]
Then
\[\|A-\sum_{i=1}^m \lambda_i Q_i\|_u =\|p(A)\|_u=\|p(N)\|_u =\|N-\sum_{i=1}^m \lambda_i P_i\|_u =0,\]
and thus
\[A=\sum_{i=1}^m \lambda_i Q_i~~\textup{is~~normal}.\]

The last statement follows trivially from the fact that $\| P_i \|_u = \| Q_i \|_u$ implies that $\mathrm{rank}\, P_i = \mathrm{rank}\ Q_i$ for all $1 \le i \le m$.

\end{proof}

%
%

\begin{rem}
If we replace the hypothesis that $\| \cdot \|_u$ separates projections by rank by the hypothesis that $A$ and $N$ share a common characteristic polynomial, then the fact that $A$ must be unitarily similar to $N$ still follows, as is easy to check.  In the absence of both hypotheses, multiplicity of eigenvalues becomes an issue.   For example, consider the unitarily-invariant norm 
\[
\| T \|_u := s_1(T) + s_2(T), \ \ \ T \in \bM_n(\bC), \]
where as always, $s_1(T) \ge s_2(T) \ge \cdots s_n(T) \ge 0$ are the singular numbers of $T$.  Let $P=\mathrm{diag}\, (1,1,1,0,0)$, $Q=\mathrm{diag}\, (1,1,0,0,0)$ be diagonal projections of ranks 3 and 2 respectively.    For any polynomial $p$ we have that $p(P)=\mathrm{diag}\, (p(1),p(1),p(1),p(0),p(0))$, while $p(Q)=\mathrm{diag}\, (p(1),p(1),p(0),p(0),p(0))$, and so $\|p(P)\|_u=\|p(Q)\|_u$.  Clearly $P$ is not unitarily equivalent to $Q$.
\end{rem}

%
%

%
%

\section{The infinite-dimensional setting:  the operator norm} \label{sec03}

\subsection{}
In~\cite{Lav1936}, Lavrentieff established the following result:

\begin{thm} \label{Lavrentieff}
Let $K \subseteq \bC$ be a compact set, and denote by $\P(K)$ the set of all polynomials with complex coefficients.   The following conditions are equivalent:
\begin{enumerate}
	\item[(a)]
	The family $\P(K)$ is uniformly dense in $\C(K) := \{ f: K \to \bC: f \mbox{ is continuous}\}$. 
	\item[(b)]
	The set $\bC \setminus K$ is connected and the interior $K^o$ of $K$ is empty.
\end{enumerate}
\end{thm}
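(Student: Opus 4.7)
The plan is to prove the two implications separately. The direction (a) $\Rightarrow$ (b) is straightforward classical complex analysis, whereas (b) $\Rightarrow$ (a) is the deep content of the theorem and requires substantially more work.

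For (a) $\Rightarrow$ (b), I would argue by contrapositive in two parts. First, suppose $\bC \setminus K$ admits a bounded component $U$; pick $z_0 \in U$ and consider $f(z) = 1/(z - z_0)$, which is continuous on $K$ since $z_0 \notin K$. If polynomials $p_n \to f$ uniformly on $K$, then $(z - z_0) p_n(z) \to 1$ uniformly on $\partial U \subseteq K$. The maximum modulus principle applied to the entire functions $(z - z_0) p_n(z) - 1$ on $\overline{U}$ forces uniform convergence to $0$ on $\overline{U}$, yet evaluation at $z_0$ yields $-1$, a contradiction. Second, if $K^o$ contains a disk $D$, then polynomials converging uniformly to $\overline{z}$ on $K$ converge in particular uniformly on $D$ to a limit that must be holomorphic there (by Morera's theorem, say), which $\overline{z}$ patently is not.

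For (b) $\Rightarrow$ (a), the approach is via duality. By Hahn--Banach together with the Riesz representation theorem, density of $\P(K)$ in $\C(K)$ is equivalent to the assertion that every complex regular Borel measure $\mu$ on $K$ satisfying $\int p \, d\mu = 0$ for all polynomials $p$ must vanish identically. The central tool is the Cauchy transform
\[\hat\mu(z) \;=\; \int_K \frac{d\mu(w)}{w - z},\]
which is holomorphic on $\bC \setminus K$. Expanding $1/(w - z)$ as a geometric series in $w/z$ for $|z|$ large and using $\int w^n \, d\mu = 0$ for all $n \geq 0$ shows $\hat\mu \equiv 0$ near $\infty$; connectedness of $\bC \setminus K$ then propagates $\hat\mu \equiv 0$ throughout $\bC \setminus K$ by analytic continuation.

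The main obstacle is to translate the vanishing of $\hat\mu$ off $K$ into the vanishing of $\mu$ itself, using only that $K^o = \emptyset$. One modern route is to invoke Mergelyan's theorem, which is immediate in this setting since the extra hypothesis that $f$ be holomorphic on $K^o$ becomes vacuous; however, Mergelyan's theorem is itself a substantial result (proved via Vitushkin-type analytic capacity estimates). Lavrentieff's original pre-Mergelyan strategy is more direct: given $f \in \C(K)$, extend it continuously with compact support to $\bC$ via Tietze, mollify to a smooth $g$ which uniformly approximates $f$ on $K$, and then apply the Cauchy--Pompeiu formula to decompose $g$ near $K$ as a holomorphic piece plus an error controlled by the solid Cauchy integral $\tfrac{1}{\pi}\iint (\partial g / \partial \overline{z})(\zeta)(\zeta - z)^{-1} \, dA(\zeta)$. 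The hypothesis $K^o = \emptyset$ is precisely what allows the non-holomorphic contribution to be made small uniformly on $K$, while Runge's theorem---available because $\bC \setminus K$ is connected---then approximates the holomorphic piece uniformly by polynomials.
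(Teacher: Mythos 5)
The paper offers no proof of this statement at all: it is quoted as a classical theorem with a citation to Lavrentieff's 1936 paper, so there is no in-paper argument to compare against. Judged on its own terms, your (a) $\Rightarrow$ (b) direction is correct and complete in outline: the maximum-modulus argument on $\overline{U}$ (using $\partial U \subseteq K$) rules out a bounded complementary component, and the Weierstrass/Morera argument with $\overline{z}$ rules out interior. Your reduction of (b) $\Rightarrow$ (a) to Mergelyan's theorem is also a legitimate way to close the proof, and it is consistent with the paper's own toolkit, since the authors invoke Mergelyan (via Rudin) in Section 4.

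The genuine gap is in the ``more direct'' route you sketch for (b) $\Rightarrow$ (a). The duality/Cauchy-transform argument only yields $\hat\mu = 0$ on $\bC \setminus K$; to recover $\mu = 0$ from the identity $\overline{\partial}\hat\mu = \pi\mu$ one needs $\hat\mu = 0$ almost everywhere with respect to \emph{area} measure, and $K^o = \emptyset$ does not force $K$ to have zero area (think of a product of fat Cantor sets). That is exactly why Lavrentieff's theorem is strictly harder than Hartogs--Rosenthal. Likewise, in the Cauchy--Pompeiu decomposition the claim that ``$K^o = \emptyset$ is precisely what allows the non-holomorphic contribution to be made small uniformly on $K$'' is unsupported: after mollifying a generic $f \in \C(K)$, the term $\overline{\partial} g$ is nonzero on a full neighbourhood of $K$, which may have positive area, and making the resulting solid Cauchy integral small on $K$ requires the localization scheme and the Koebe/analytic-capacity estimates (pushing poles into the connected unbounded component) that constitute the real content of the Mergelyan--Vitushkin argument. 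So either commit to citing Mergelyan outright, or accept that the ``direct'' route needs those estimates spelled out; as written, the key step is asserted rather than proved.
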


In view of this result, we refer to a compact set $K$ satisfying condition (b) as a \textbf{Lavrentieff set}.   The relevance of this result for us is that as a consequence of the Spectral Theorem for normal operators (see, e.g.~\cite[Theorem 2.7 (iii) $\iff$ (iv)]{FFM05} ), if $N \in \B(\H)$ is a normal operator, then $N^*$ belongs to the norm-closed unital algebra $\textup{Alg}(N)$ 	generated by $N$ if and only if $\sigma(N)$ is a Lavrentieff set.  In other words, $\textup{Alg}(N) = C^*(N)$ if and only if $\sigma(N)$ is a Lavrentieff set.

%
%

Our first result is general enough to hold in the context of $C^*$-algebras.  

\smallskip

\begin{thm}\label{L set}
Let $\fA, \fB$ be unital $C^*$-algebras, $N\in \fA$ be a normal element with Lavrentieff spectrum and $A \in \fB$.
Suppose that $N$ and $A$ are polynomially isometric.  Then $A$ is normal.
\end{thm}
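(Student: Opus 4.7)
The plan is to use Lavrentieff's theorem to promote the polynomial isometry hypothesis into an isometric unital algebra embedding $\phi: C(\sigma(N)) \to \fB$, and then to verify that any such embedding is automatically a $*$-homomorphism, from which the normality of $A = \phi(z)$ follows immediately.

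First, since $N$ is normal, the continuous functional calculus in $\fA$ gives $\|p(N)\|_\fA = \|p\|_{C(\sigma(N))}$ for every polynomial $p$. Combined with the polynomial isometry hypothesis, this yields $\|p(A)\|_\fB = \|p\|_{C(\sigma(N))}$, so $p \mapsto p(A)$ descends to a well-defined, isometric, unital, multiplicative linear map from $\P(\sigma(N)) \subseteq C(\sigma(N))$ into $\fB$. Theorem~\ref{Lavrentieff} asserts that $\P(\sigma(N))$ is uniformly dense in $C(\sigma(N))$, and so by continuity we obtain an isometric unital algebra homomorphism $\phi: C(\sigma(N)) \to \fB$ satisfying $\phi(z) = A$.

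Set $B := \phi(\bar{z})$. Since $\phi$ is multiplicative and its domain is commutative, $AB = \phi(z\bar{z}) = \phi(\bar{z} z) = BA$, so $A$ and $B$ commute. The crux of the proof, and the main obstacle, is to show that $B = A^*$, for then $A^*A = BA = AB = AA^*$ and $A$ is normal.

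To establish $B = A^*$ I would show that $\phi$ is a $*$-homomorphism. Since any $f \in C(\sigma(N))$ decomposes as $f = g + ih$ with $g = \tfrac{1}{2}(f + \bar{f})$ and $h = \tfrac{1}{2i}(f - \bar{f})$ real-valued, it suffices to check that $\phi$ sends real-valued functions to self-adjoint elements of $\fB$. If $f$ is real-valued then $\exp(itf)$ has uniform norm $1$ on $\sigma(N)$ for every $t \in \bR$; applying the continuous unital algebra homomorphism $\phi$ to the norm-convergent exponential series gives $\phi(\exp(itf)) = \exp(it\phi(f))$, and the isometry of $\phi$ forces $\|\exp(it\phi(f))\|_\fB = 1$ for every $t \in \bR$. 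The classical characterization of self-adjointness in $C^*$-algebras (an element $b \in \fB$ is self-adjoint if and only if $\|\exp(itb)\|_\fB = 1$ for all $t \in \bR$) then guarantees that $\phi(f)$ is self-adjoint, so $\phi$ is $*$-preserving; in particular $B = \phi(\bar{z}) = \phi(z)^* = A^*$, and $A$ is normal.
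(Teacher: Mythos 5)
Your proof is correct and takes essentially the same route as the paper's: Lavrentieff's theorem upgrades the polynomial isometry to an isometric unital homomorphism $\phi$ of $C(\sigma(N))\cong C^*(N)$ into $\fB$, whose automatic $*$-preservation gives $A^*=\phi(\bar z)$ inside the commutative range, hence normality of $A$. The only divergence is that where the paper cites \cite[Proposition A.5.8]{BLeM2004} for the fact that a contractive homomorphism between $C^*$-algebras is a $*$-homomorphism, you prove the needed special case directly via the classical characterization of self-adjoint elements as those $b$ with $\|e^{itb}\|=1$ for all $t\in\bR$, which is a correct and self-contained substitute.
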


\begin{proof}
Let $\textup{Alg}(N)$ (resp. $\textup{Alg}(A)$) denote the unital commutative algebra generated by $N$ (resp. by $A$).  
Since $\sigma(N)$ is a Lavrentieff set, $\textup{Alg}(N) = C^*(N)$, as noted above.
Define \[\Phi_0: p(N)\mapsto p(A), \ \ \ \ \ \ p \mbox{ a polynomial}.\]
Then $\Phi_0$ is a unital isometric isomorphism, and can thus be uniquely extended to an isometric isomorphism 
$\Phi:  C^*(N)  \to  \fB$ whose range is $\textup{Alg}(A)$.  It is known, however (see, e.g.~\cite[Proposition~A.5.8]{BLeM2004}) that a contractive homomorphism between $C^*$-algebras is necessarily a ${}^*$-homomorphism, and so $\textup{Alg}(A)$ must also be a $C^*$-algebra, being isometrically isomorphic to $C^*(N)$.   In particular, since $\textup{Alg}(A)$ is abelian and $A^* \in \textup{Alg}(A)$, $A$ must be normal.

\end{proof}

%
%

\begin{rem}
The result of Brooks and Condori~\cite[Corollary 3]{BC2018} is a finite-dimensional version of
Theorem~\ref{L set}, as a finite subset of $\bC$ is obviously a Lavrentieff set.
\end{rem}

%
%

\begin{defn}
Let $K$ be a compact subset of $\bC$, define the polynomially convex hull of $K$ to
be the set $\widehat{K}$ given by
\[\widehat{K}:=\{z\in \bC: |p(z)|\leq  \|p\|_K~~\textup{for}~~\textup{every}~~\textup{polynomial}~~p\},\]
where
\[\|p\|_K:= \max\{|p(z)|:z\in K\}.\]
In fact, $\bC\setminus \widehat{K}$ is exactly the unbounded
component of $\bC\setminus K$ $($see \cite[p206, Proposition 5.3]{Con90}$)$.
\end{defn}

%
%

Recall that given an element $a$ of a Banach algebra $\A$, the \textbf{spectral radius} of $a$ is $\mathrm{spr}(a) := \max \{ |\lambda| : \lambda \in \sigma(a)\}$.

\smallskip

\begin{thm} \label{thm4.03}
Let $N\in B(\H)$ be normal.
The following are equivalent:
\begin{enumerate}
	\item[(a)] 
	$\sigma(N)$ is a Lavrentieff set.
	\item[(b)]
	If $A\in B(\H)$ is polynomially isometric to $N$, then $A$ is normal.
\end{enumerate}
\end{thm}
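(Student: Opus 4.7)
The plan is to prove the two directions separately. The implication $(a) \Rightarrow (b)$ is immediate, being the special case $\fA = \fB = \B(\H)$ of Theorem~\ref{L set}. For the converse $(b) \Rightarrow (a)$, I would argue by contraposition: assuming $\sigma(N)$ is not Lavrentieff, I would exhibit a non-normal operator $A \in \B(\H)$ that is polynomially isometric to $N$ with respect to the operator norm.

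The starting point is the observation that, whether the failure of the Lavrentieff condition comes from $\sigma(N)$ having non-empty interior or from $\bC \setminus \sigma(N)$ being disconnected, in both cases the polynomially convex hull $\widehat{\sigma(N)}$ has non-empty interior: in the first case trivially, and in the second because every bounded component of $\bC \setminus \sigma(N)$ is an open subset of $\widehat{\sigma(N)}$. This lets me select a closed disk $\overline{D(z_0, \rho)} \subseteq (\widehat{\sigma(N)})^o$.

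For the construction I would let $S \in \B(\ell^2(\bN))$ be the unilateral shift and set $T := z_0 I + \rho S$, which is non-normal. Using the classical identity $\|q(S)\| = \|q\|_{\overline{\bD}}$ for every polynomial $q$ (one inequality is von Neumann, the other comes from $\mathrm{spr}(q(S)) = \|q\|_{\sigma(S)} = \|q\|_{\overline{\bD}}$ since $\sigma(S) = \overline{\bD}$), a change of variables gives $\|p(T)\| = \|p\|_{\overline{D(z_0, \rho)}}$ for every polynomial $p$. Since $\overline{D(z_0, \rho)} \subseteq \widehat{\sigma(N)}$, the very definition of the polynomially convex hull yields $\|p\|_{\overline{D(z_0, \rho)}} \leq \|p\|_{\sigma(N)}$, and the normality of $N$ gives $\|p\|_{\sigma(N)} = \|p(N)\|$. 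Identifying $\H$ with $\H \oplus \ell^2(\bN)$, set $A := N \oplus T$; then $A$ is non-normal (one of its summands is) and $\|p(A)\| = \max(\|p(N)\|, \|p(T)\|) = \|p(N)\|$ for every polynomial $p$, so $A$ and $N$ are polynomially isometric.

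The main conceptual point is the unified treatment of the two failure modes (holes and interior) via the polynomially convex hull: each case reduces to finding a disk in $(\widehat{\sigma(N)})^o$ onto which one can paste a scaled copy of the unilateral shift. I do not anticipate any substantial technical obstacle, as the remaining inputs (von Neumann's inequality realized sharply by $S$, and the max-modulus description of $\widehat{\sigma(N)}$) are classical.
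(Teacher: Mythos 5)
Your proposal is correct and follows essentially the same route as the paper: the forward direction is the same specialization of Theorem~\ref{L set}, and for the converse both arguments locate a disk inside the interior of $\widehat{\sigma(N)}$, attach a translated/scaled unilateral shift as a direct summand, and bound $\|p(T)\|$ by $\|p(N)\|$ via the definition of the polynomially convex hull. The only (immaterial) difference is that you justify $\|q(S)\| = \|q\|_{\overline{\bD}}$ via von Neumann's inequality plus the spectral radius, whereas the paper uses subnormality of $S$ to get $\|p(\varepsilon S)\| = \mathrm{spr}(p(\varepsilon S))$.
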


\begin{proof}

\begin{enumerate}
	\item[(a)] implies (b):  
	This is an immediate consequence of Theorem~\ref{L set}. 
	\item[(b)] implies (a):
	Suppose that  $\sigma(N)$ is not a Lavrentieff set.
	Let $L=\widehat{\sigma(N)}$. Then $L^o\neq \emptyset$. By translating both $A$ and $N$ by the same scalar operator, we may assume without loss of generality that $0\in L^o$.

	Choose $\varepsilon>0$, such that $K:= \overline{B(0, \varepsilon)}\subset L^o$. Denote by $S$ the unilateral shift operator on $l^2$.
	Set $\K=\H\oplus \ell^2$.  Define
	\[T := N\oplus \varepsilon S\in B(\K).\]
	As $S$ is subnormal, given any polynomial $p$, we have that $p(\varepsilon S)$ is subnormal,
	hence by \cite[Proposition 6.10]{Kub11},
	\[\|p(\varepsilon S)\|= \mathrm{spr}(p(\varepsilon S)).\]
	By the Maximal Modulus Principle and the Spectral Mapping Theorem,
	\begin{align*}
	\| p(\varepsilon S)\|
		&=\mathrm{spr} (p(\varepsilon S)) \\
		&=\max\{| p (\lambda)|: \lambda\in \sigma(\varepsilon S)\} \\
		&\leq \max\{| p(\mu)|: \mu\in \sigma(N)\}	\\
		&= \mathrm{spr}(p(N)) \\
		&=\| p(N)\|.
	\end{align*}	
	It follows that $N$ and $T$ are polynomially isometric.
	Clearly, however,  $T$ is not normal.  Since $\H$ is isomorphic to $\H \oplus \ell^2$, we see that $T$ is unitarily equivalent to an $A\in B(\H)$ which must also be non-normal and polynomially isometric to $N$.
\end{enumerate}
\end{proof}

%
%


\section{On the $(c, p)$ norms of Chan, Li and Tu.} \label{sec04}

\subsection{} \label{sec4.01}
We now wish to extend the results of Section~\ref{sec03} to a family of unitarily-invariant norms on $\B(\H)$ first introduced by Chan, Li and Tu in~\cite{CLT2001}.  Before doing so, we first remind the reader of the definition of singular values for not-necessarily compact operators.  

We denote by $\K(\H)$ the ideal of compact operators in $\B(\H)$.    For $K$ in $\K(\H)$, the \textbf{singular numbers} $(s_n(K))_{n=1}^\infty$  of $K$ are defined to be the eigenvalues (repeated according to multiplicity) of $|K| = (K^* K)^{\frac{1}{2}}$.  This notion was  extended to non-compact operators in~\cite{GK1969} as follows.  
Let $T \in \B(\H)$ be an arbitrary operator.   Then $|T| := (T^* T)^{\frac{1}{2}}$ is a positive operator.   Let $\alpha := \max \{ \lambda: \lambda \in \sigma_e(|T|)\}$, where $\sigma_e(X)$ is the \textbf{essential spectrum} of $X \in \B(\H)$, that is, the spectrum of the image $\pi(X)$ of $X \in \B(\H)$ under the canonical map $\pi: \B(\H) \to \B(\H)/\K(\H)$, the Calkin algebra.  It is a consequence of the work of Wolf~\cite{Wol1959} that if $Q:= E_{|T|}( (\alpha, \| T\|])$ denotes the spectral projection of $|T|$ corresponding to the set $(\alpha, \| T\|] \subseteq \bR$, then the spectrum of the compression $(Q |T| Q)_{Q\H}$ of $|T|$ to the range of $Q$ is the closure of a sequence (perhaps finite) of isolated eigenvalues of finite multiplicity of $|T|$ tending to $\alpha$.  Thus, 
\begin{itemize}
	\item{} 
	if $\mathrm{rank}\, Q = 0$, we set $s_n(T) = \alpha$ for all $n \ge 1$.
	\item{}
	If $0 < \mathrm{rank}\, Q = m < \infty$, we denote by $\{ s_n(T)\}_{n=1}^m$ the eigenvalues of $(Q |T| Q)_{Q \H}$, repeated according to multiplicity, and for $n \ge m+1$, we set $s_n(T)= \alpha$. 
	\item{}
	If $\mathrm{rank}\, Q = \infty$, we denote by $\{ s_n(T)\}_{n=1}^\infty$ the eigenvalues of $(Q |T| Q)_{Q \H}$, written in non-increasing order repeated according to their multiplicity.
\end{itemize}
We refer the interested reader to~\cite{GK1969} and to~\cite{Pie1987} for more details regarding the singular numbers of operators.

%
%

\subsection{} \label{sec4.02}
We are now in a position to describe the norms of Chan, Li and Tu.  Let $1 \le p < \infty$, and let $n \ge 1$ be an integer.  Suppose that we are given real numbers $c_1 \ge c_2 \ge \cdots \ge c_n > 0$.   For $T \in \B(\H)$, we define the $(c,p)$-norm of $T$ to be 
\[
\|T\|_{c,p}=(\sum_{j=1}^n c_j s_j(T)^p)^{\frac{1}{p}}.\]
In~\cite{CLT2001}, they proved (amongst other things) that these are indeed unitarily-invariant norms on $\B(\H)$, and that for all $T \in \B(\H)$, 
\[
c_1^{1/p} \| T \| \le \| T \|_{c,p} \le \left( \sum_{j=1}^n c_j \right)^{1/p} \| T \|. \]

This family of norms includes
\begin{itemize}
	\item{}
	the operator norm $\| T \| = s_1(T)$;
	\item{}
	the \textbf{Ky-Fan $n$-norm}  $\| T \|_{[n]} := \sum_{j=1}^n s_j(T)$;
	\item{}
	the \textbf{weighted Ky-Fan $n$-norm} $\| T\|_{[c, n]} := \sum_{j=1}^n c_j s_j(T)$, where $c_1 = 1$ and $c_j \ge c_{j+1}$ for all $1 \le j \le n-1$, as well as 
	\item{}
	the \textbf{$[p,n]$-singular norm}  $\| T \|_{[p,n]} := \left( \sum_{j=1}^n s_j^p(T) \right)^{1/p}$.
\end{itemize}

We also mention the useful fact that if $k \ge n$, then the $(c,p)$-norm defines a unitarily-invariant norm on $\bM_k(\bC)$.

%
%

Our main result in this section is the following.
	
\begin{thm}\label{C_p}
Let $1 \le p < \infty$, $n\in \bN$, $c_1\geq c_2\geq \cdots \geq c_n>0$.  
Suppose that $A$ and $N$ are compact operators, and  that $N$ is normal.  If $A$ and $N$ are polynomially isometric with respect to $\| \cdot \|_{c,p}$, then $A$ is normal.
\end{thm}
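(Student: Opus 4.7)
The plan is to mimic the finite-dimensional argument of Theorem~\ref{thm2.04} in the compact setting, with polynomial approximations playing the role of Lagrange interpolating polynomials.  The starting observation is that, since $N$ is a compact normal operator acting on an infinite-dimensional space, $\sigma(N)$ is a countable set with $0$ as its only possible accumulation point; hence $\bC\setminus\sigma(N)$ is connected and $\sigma(N)^o = \varnothing$, so $\sigma(N)$ is a Lavrentieff set, and by Theorem~\ref{Lavrentieff} every continuous function on $\sigma(N)$ is a uniform limit of polynomials.  I shall repeatedly invoke the norm equivalence
\[ c_1^{1/p}\|T\| \le \|T\|_{c,p} \le \Bigl(\sum_{j=1}^n c_j\Bigr)^{1/p}\|T\|, \]
so that operator-norm and $(c,p)$-norm convergence coincide for sequences in $\B(\H)$.

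Enumerate the distinct nonzero points of $\sigma(N)$ as $\{\lambda_i\}_{i\ge 1}$ with $|\lambda_1|\ge|\lambda_2|\ge\cdots\to 0$, and let $P_i$ be the (finite-rank) spectral projection of $N$ at $\lambda_i$.  For each $i$ I choose polynomials $p^{(i)}_k$ with $p^{(i)}_k \to \chi_{\{\lambda_i\}}$ uniformly on $\sigma(N)$, so that $p^{(i)}_k(N) \to P_i$ in operator norm.  By polynomial isometry $\{p^{(i)}_k(A)\}_k$ is $(c,p)$-Cauchy, hence converges in operator norm to some $Q_i\in\B(\H)$.  Since the polynomials $p^{(i)}_k(1-p^{(i)}_k)$ and $p^{(i)}_k p^{(j)}_k$ (for $i\ne j$) tend to $0$ uniformly on $\sigma(N)$, polynomial isometry forces $Q_i^2 = Q_i$ and $Q_iQ_j = 0$.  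Moreover $p^{(i)}_k(0)\to 0$ and $A$ is compact, so $Q_i$ is compact, and therefore a finite-rank idempotent.

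The crucial step will be self-adjointness of each $Q_i$.  Taking $(c,p)$-norm limits in the identity $\|(1-p^{(i)}_k)(A)\|_{c,p} = \|(1-p^{(i)}_k)(N)\|_{c,p}$ gives $\|I-Q_i\|_{c,p} = \|I-P_i\|_{c,p}$.  In the decomposition $\H = \mathrm{ran}(Q_i) \oplus \mathrm{ran}(Q_i)^\perp$ write
\[ Q_i = \begin{pmatrix} I_{s_i} & Y_i \\ 0 & 0 \end{pmatrix}, \qquad I - Q_i = \begin{pmatrix} 0 & -Y_i \\ 0 & I \end{pmatrix}, \]
so that $|I-Q_i|^2 = \begin{pmatrix} 0 & 0 \\ 0 & I + Y_i^*Y_i \end{pmatrix}$.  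Since $I-Q_i$ is a compact perturbation of the identity, its essential spectrum is $\{1\}$, and the Gohberg--Krein prescription of Section~\ref{sec4.01} identifies the singular values of $I-Q_i$ as $\sqrt{1+\sigma_j(Y_i)^2}$ for $1\le j\le \mathrm{rank}(Y_i)$, followed by $1$ from that index onward, while the singular values of $I-P_i$ are identically $1$.  The equality of $(c,p)$-norms, together with $c_j>0$, forces every $\sigma_j(Y_i) = 0$; hence $Y_i = 0$ and $Q_i$ is a projection.

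To assemble the decomposition I consider the polynomials $r_{M,k}(z) := z - \sum_{i=1}^M \lambda_i p^{(i)}_k(z)$.  Passing to the limit in $k$ (using continuity of $\|\cdot\|_{c,p}$ under $(c,p)$-convergence) and invoking polynomial isometry,
\[
\Bigl\|A - \sum_{i=1}^M \lambda_i Q_i\Bigr\|_{c,p} = \Bigl\|N - \sum_{i=1}^M \lambda_i P_i\Bigr\|_{c,p} \le \Bigl(\sum_{j=1}^n c_j\Bigr)^{1/p}|\lambda_{M+1}|,
\]
which tends to $0$ as $M\to\infty$.  As the $Q_i$ are pairwise orthogonal projections and $|\lambda_i|\to 0$, the series $\sum_{i\ge 1}\lambda_i Q_i$ converges in operator norm to a normal operator, which must therefore equal $A$.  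The main obstacle is the self-adjointness step: Lemma~\ref{projection} cannot be transferred verbatim because $I-P_i$ and $I-Q_i$ are not compact, and it is the explicit singular-value computation (made possible by the Gohberg--Krein definition of Section~\ref{sec4.01}) that must replace it.
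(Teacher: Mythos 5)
Your proof is correct, but it takes a genuinely different route from the paper's. The paper works with \emph{cumulative} Riesz idempotents $Q_k = E_A(\Delta_k)$ over unions of balls about the first $n_k$ eigenvalues, which requires Mergelyan's theorem on a region $E_k$ with connected complement together with the Riesz functional calculus for the (a priori non-normal) operator $A$; it then establishes that each $Q_k$ is self-adjoint by compressing to a finite-dimensional reducing subspace $\H_k$ and invoking Lemma~\ref{projection}, proves normality of the compressions $A_k = Q_k A Q_k$ via Theorem~\ref{thm2.04}, and finishes with \textsc{sot}-limits and a separate estimate showing the residual corner $Z$ vanishes. You instead approximate the \emph{individual} characteristic functions $\chi_{\{\lambda_i\}}$ (continuous on the Lavrentieff set $\sigma(N)$) and define $Q_i$ as the operator-norm limit of $p^{(i)}_k(A)$, which sidesteps the Riesz functional calculus for $A$ entirely; you replace the finite-dimensional projection lemma by a direct computation of the singular values of $I-Q_i$ via the Gohberg--Krein prescription of Section~\ref{sec4.01} --- your remark that Lemma~\ref{projection} cannot be quoted verbatim because $I-P_i$ and $I-Q_i$ are not compact is exactly right, and your computation is in effect the infinite-dimensional analogue of Lemmas~\ref{comparison} and~\ref{projection} combined; and you assemble $A=\sum_i \lambda_i Q_i$ in one step from the tail estimate $\|N-\sum_{i\le M}\lambda_i P_i\|_{c,p}\le (\sum_j c_j)^{1/p}|\lambda_{M+1}|$, obtaining normality of $A$ as a norm limit of normal operators rather than through the $Z=0$ argument. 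What the paper's route buys is the reuse of Theorem~\ref{thm2.04} as a black box; what yours buys is a shorter, case-free argument (the finite-spectrum case degenerates to a finite sum rather than requiring the paper's separate Case Two) that also exhibits the spectral decomposition of $A$ explicitly. All the individual steps check out: the two-sided equivalence of $\|\cdot\|_{c,p}$ with the operator norm justifies every limit you take, $Q_i^2=Q_i$ and $Q_iQ_j=0$ follow from uniform convergence of $p^{(i)}_k(1-p^{(i)}_k)$ and $p^{(i)}_kp^{(j)}_k$ to $0$ on $\sigma(N)$, compactness of $Q_i$ follows since $p^{(i)}_k(0)\to\chi_{\{\lambda_i\}}(0)=0$, and the strict inequality $c_1(1+\sigma_1(Y_i)^2)^{p/2}>c_1$ for $Y_i\ne 0$ forces $Y_i=0$.
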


\begin{proof}
Let $\{ e_n\}_{n=1}^\infty$ be an orthonormal basis for $\H$.  Set $K:= \sigma(N)\cup\sigma(A)$. Since $N,A$ are compact operators, $K$ is countable, and if it is infinite, it consists of a sequence converging to $0$ (along with $0$ itself).

\smallskip

\noindent{\textbf{Case One.}} $K$ is infinite.

Let us write $K=\{\lambda_i: i\in \bN\}\cup\{0\}$, where $|\lambda_1|\geq |\lambda_2|\geq \cdots>0$, and $\lambda_i\neq \lambda_j$ whenever $i\neq j$, and let us choose $1 \le n_1 < n_2 < \cdots < n_k < \cdots$ such that $|\lambda_{n_k}| > | \lambda_{n_k +1} |$.

For each $k\in \bN$, choose $0 < \varepsilon_k $ such that
\begin{enumerate}
	\item[(i)]
	for  $i \ne j\in \{1,\cdots, n_k\}$,  $B(\lambda_i, 2\varepsilon_k)\cap B(\lambda_j, 2\varepsilon_k)=\varnothing$; and
	\item[(ii)]
	$\{\lambda_j: j > n_k \}\subset B(0, |\lambda_{n_k}|-3\varepsilon_k)$.
\end{enumerate}

Define
\[E_k=(\bigcup_{1\leq i\leq n_k} \overline{B(\lambda_i, \varepsilon_k)})\bigcup \overline{B(0, |\lambda_{n_k}|-2\varepsilon_k)}.\]
Then $E_k$ is a compact set of $\bC$ and $\bC \setminus E_k$ is connected.

Define
\[\Delta_k=\bigcup_{1\leq i\leq n_k} \overline{B(\lambda_i, \varepsilon_k)},\]
and denote by $\chi_{\Delta_k}$ be the characteristic function of $\Delta_k$.  Observe that $\chi_{\Delta_k}$ is holomorphic, hence continuous on $E_k$.
 Let
\[P_k=E_N(\Delta_k),\ \ \ \ (\mbox{resp. } Q_k=E_A(\Delta_k))\]
be the Riesz idempotent for $N$ (resp. for $A$) corresponding to $\Delta_k$.  Since $N$ is normal, $P_k$ is in fact an orthogonal projection.
As $N$ and $A$ are compact, 
\[\rank P_k<\infty \ \ \ \ \ \mbox{ and }\ \ \ \ \ \rank Q_k<\infty.\]

For $k\in \bN$, consider the following function  \[f_k(z)=
\left\{
\begin{array}
    {r@{\quad \quad}l}
    \frac{1}{z},& ~~z\in \bigcup_{1\leq i\leq n_k}B(\lambda_i, 2\varepsilon_k); \\
    0,& ~~z\in  B(0, |\lambda_{n_k}|-3\varepsilon_k).
\end{array}
\right.
\]
Then $f_k$ is also holomorphic on a neighbourhood of $K$.

By Mergelyan's theorem (see~\cite[Theorem 20.5]{Rud87}), we can choose polynomials $t_j^{(k)}$, $j \ge 1$, such that
\[\underset{j\to \infty}{\lim}\|t_j^{(k)}-f_k\|_{E_k}=0.\]
Thus, by the Riesz Functional Calculus~\cite[p201. 4.7(e)]{Con90},
\[\underset{j\to \infty}{\lim}\|t_j^{(k)}(N)-f_k(N)\|=0, \mbox{\ \ \ and \ \ \ }\underset{j\to \infty}{\lim}\|t_j^{(k)}(A)-f_k(A)\|=0,\]
where $\| \cdot \|$ denotes the  operator norm in $\B(\H)$.
By \cite[Proposition 2.4.(a)]{CLT2001},
 \[\underset{j\to \infty}{\lim}\|t_j^{(k)}(N)N-f_k(N)N\|_{c,p}=0, \mbox{\ \ \ and \ \ \ }\underset{j\to \infty}{\lim}\|t_j^{(k)}(A)A-f_k(A)A\|_{c,p}=0,\]
Therefore,
\[\|Q_k\|_{c,p}=\|f_k(A)A\|_{c,p}=\underset{j\to \infty}{\lim}\|t_j^{(k)}(A)A\|_{c,p}=\underset{j\to \infty}{\lim}\|t_j^{(k)}(N)N\|_{c,p}=\|f_k(N)N\|_{c,p}=\|P_k\|_{c,p}.\]

Similarly,
\[\|I-Q_k\|_{c,p}=\|I-P_k\|_{c,p}.\]

Denote $H_k= \ran\, P_k \vee (\ker\, Q_k)^{\perp} \vee \ran\, Q_k \vee \mathrm{span}\{ e_1, e_2, \ldots, e_{\alpha_k} \}$, where $\alpha_k := \mathrm{rank}\, P_k + \mathrm{dim} (\ker\, Q_k)^{\perp} + \mathrm{rank}\, Q_k + 3k $. Set
$R_k$ be the (finite rank) orthogonal projection of $\H$ onto $\H_k$. Clearly $\H_k$ is reducing for both $P_k$ and $Q_k$, and thus $P_k=R_kP_kR_k$, $Q_k=R_kQ_kR_k$.  Moreover, it is readily verified that
\[\|R_k-P_k\|_{c,p}=\|I-P_k\|_{c,p}=\|I-Q_k\|_{c,p}=\|R_k-Q_k\|_{c,p}.\]

Using the fact that the $(c,p)$-norm on $\B(\H_k)$ is unitarily-invariant, it now follows Lemma~\ref{projection} that   $Q_k$ is a projection.
Define
\[N_k:= P_k N = P_kNP_k ,\ \ \ \ \ A_k:= Q_k A = Q_kAQ_k.\]

We now show that $A_k$ and $N_k$ are in fact polynomially isometric.  Let $q$ be an arbitrary polynomial, and write $q = q(0) + q_1$, where $q_1(0) = 0$.   Observe that using~\cite[Proposition 2.4.(a)]{CLT2001} and the fact that $A$ and $N$ are polynomially isometric, we obtain

\begin{eqnarray*}
\|q(N_k)\|_{c,p}&=&\|(q(0)) I+P_kq_1(N)\|_{c,p}\\
&=&\underset{j\to \infty}{\lim}\|q(0)I+t_j^{(k)}(N)Nq_1(N)\|_{c,p}\\
&=&\underset{j\to \infty}{\lim}\|q(0)I+t_j^{(k)}(A)Aq_1(A)\|_{c,p}\\
&=&\|q(0)I+Q_kq_1(A)\|_{c,p}\\
&=&\|q(A_k)\|_{c,p}.\\
\end{eqnarray*}

Notice also that
\[\|q(N_k)\|_{c,p}=\| q(0) R_k+q_1(N_k)\|_{c,p},~~\|q(A_k)\|_{c,p}=\|q(0) R_k+q_1(A_k)\|_{c,p}.\]
We may view each $N_k$ as an operator on $\H_k$, and similarly view $A_k$ as an operator on $\H_k$.
Once again, the fact that the $(c, p)$-norm on $\B(\H_k)$ is unitarily-invariant, combined with Theorem~\ref{thm2.04} implies that $A_k$ is normal, and
\[\sigma(N_k)=\sigma(A_k)=\{\lambda_1, \cdots, \lambda_{n_k}\} \cup \{ 0 \}.\]

Note that $\{Q_k\}_{k=1}^\infty$ is an increasing sequence of projections (of finite rank),
and thus if we define
\[Q=\sup\{Q_k: k\in \bN\}=\textup{SOT}-\underset{k\to \infty}{\lim}Q_k,\]
 then $Q$ is also a projection.
Since $Q_kA=AQ_k$, for all $k\in \bN$, and since $A_k = Q_k Q Q_k$ is normal for all $k$, we see that $AQ=QA$ and that $QAQ$ is normal.

Similarly,  $\{P_k\}_{k=1}^\infty$ is an increasing sequence of projections (of finite rank),
and so
\[P=\sup\{Q_k: k\in \bN\}=\textup{SOT}-\underset{k\to \infty}{\lim}P_k,\]
also defines a projection.  As in the previous case, the fact that  $P_kN=NP_k$ for all $k\in \bN$ implies that $PN=NP$.

Write
\[N=\begin{matrix}\begin{bmatrix}
 PNP&0\\
 0&0
\end{bmatrix}&\begin{matrix}
  \ran P\\
  \ran P^\perp\\
\end{matrix}
\end{matrix},\ \ \ \ \ \ \ \ \ \ A=\begin{matrix}\begin{bmatrix}
QAQ&0\\
 0&Z
\end{bmatrix}&\begin{matrix}
  \ran Q\\
  \ran Q^\perp\\
\end{matrix}
\end{matrix}.\]

If $Z\neq 0$, we can choose $k^*\in \bN$ sufficiently large so that
\[(\sum_{j=1}^n c_j)|\lambda_{n_{k^*}+1}|^p<(\frac{1}{3}\|Z\|)^p.\]

it follows that
\begin{align*}
\|Z\|_{c,p}
	&=\|Q^{\perp}(A-A_{k^*})Q^\perp\|_{c,p}\\
	&\leq\|A-Q_{k^*}A\|_{c,p}\\
	&=\underset{j\to \infty}{\lim}\|A-t_j^{(k^*)}(A)AA\|_{c,p}\\
	&=\underset{j\to \infty}{\lim}\|N-t_j^{(k^*)}(N)NN\|_{c,p}\\
	&=\|N-P_{k^*}N\|_{c,p}\\
	&= \| N - N_{k^*} \|_{(c, p)} \\
	&\leq ((\sum_{j=1}^n c_j)|\lambda_{n_{k^*}+1}|^p)^\frac{1}{p}\\
	&<\frac{1}{3}\|Z\|_{c,p}.
\end{align*}

This obvious contradiction implies, therefore, that $Z=0$. Since $QAQ$ is normal and $Z=0$,  $A$ is normal.

\smallskip

\noindent{\textbf{Case Two.}} $K$ is finite.

The argument here is similar to that of Case One.  The main difference is that instead of considering an infinite sequence of projections $P_k$ and $Q_k$, it suffices to consider a single pair of projections $P$ and $Q$, defined as follows.   

After writing $K = \{ \lambda_1, \lambda_2, \ldots, \lambda_{k_0}\} \cup \{ 0\}$, with $|\lambda_1| \ge | \lambda_2 | \ge \cdots \ge |\lambda_{k_0}| > 0$ and $\lambda_i \ne \lambda_j$ if $i \ne j$, we only choose one value of $\varepsilon > 0$ such that
\begin{enumerate}
	\item[(i)]
	for $i \ne j \in \{ 1, 2, \ldots, k_0\}$, $B(\lambda_i, 2 \varepsilon) \cap B(\lambda_j, 2 \varepsilon) = \varnothing$; and
	\item[(ii)]
	$B(0, 2 \varepsilon) \cap B(\lambda_i, 2 \varepsilon) = \varnothing$, $1 \le i \le k_0$.
\end{enumerate}
We then consider $\Delta := \cup_{1 \le i \le k_0} \overline{B(\lambda_i, \varepsilon)}$.   	Arguing as in Case One, we see that if $P$ and $Q$ are the Riesz projections for $N$ and $A$ corresponding to the set $\Delta$, then $P$ is a projection (as $N$ is normal) and we argue as in Case One that $Q$ is also selfadjoint.    The remainder of the proof follows as above.
\end{proof}

%
%

\begin{rem} \label{rem4.04}
It is clear from our work in Section~\ref{sec03} that this result does not extend directly to general operators on $\B(\H)$, since the result fails for the operator norm, which we have seen to be an example of a $(c,p)$-norm.
\end{rem}


\section{Polynomials without constant term} \label{sec05}

\subsection{} \label{sec5.01}
It is tempting to try to extend the results above to the classes of compact operators lying in the various Schatten $p$-ideals.   Recall that with the singular numbers of a compact operator defined as in Section~\ref{sec04}, for $1 \le p < \infty$, the \textbf{Schatten $p$-class} $\C_p(\H)$ is defined as 
\[
\C_p(\H) := \{ K \in \B(\H): K \mbox{ is compact and }  \sum_{n=1}^\infty (s_n(K))^p < \infty\}, \]
and it is equipped with the unitarily-invariant norm
\[
\| K\|_p := \left( \sum_{n=1}^\infty (s_n(K))^p \right)^{1/p}. \]

Each $\C_p(\H)$ is well-known to be an ideal of $\B(\H)$.  Because of this, the identity operator $I \not \in \C_p(\H)$ for any $1 \le p < \infty$, and so at best, when considering polynomial isometry of two elements $A$ and $B$ of $\C_p(\H)$, we must restrict our attention to those polynomials $q$ which satisfy  $q(0) = 0$.

We now produce an example of two operators $A$ and $N$ in $\C_2(\H)$ (the ideal of Hilbert-Schmidt operators on $\H$) such that $N$ is normal, $A$ and $N$ are polynomially isometric (for polynomials which vanish at $0$), and yet $A$ is not normal.   

%
%

\begin{eg} \label{eg5.02}
Let $\{e_k\}_{k=1}^\infty$ be an orthonormal basis for $\H$, and for $x, y \in \H$, denote by $x \otimes y^*$ the rank-one operator $x\otimes y^*(z) = \langle z, y \rangle x$, $z \in \H$.  
Define $N=e_1\otimes e_1^*+e_2\otimes e_2^*$, $A=e_1\otimes e_1^*+e_1\otimes e_2^*$.  
Then $N$ is a projection and $A$ is an idempotent but it is not a projection.  In particular, $A$ is not normal.
Nevertheless, we find that
\[\|N\|_2=\tr(N^*N)^{\frac{1}{2}}=\sqrt{2},~~\|A\|_2=\tr(A^*A)^{\frac{1}{2}}=\sqrt{2}.\]
Hence for every polynomial $f(z)=a_nz^n+\cdots+a_1z$,
\[\|f(N)\|_2=\|(\sum_{i=1}^n a_i)N\|_2 =\sqrt{2}|\sum_{i=1}^n a_i|,~~\|f(A)\|_2=\|(\sum_{i=1}^n a_i)A\|_2=\sqrt{2}|\sum_{i=1}^n a_i|.\]

That is, $A$ and $N$ are polynomially isometric relative to $\| \cdot \|_2$, considering only polynomials which vanish at $0$.
\end{eg}

%
%

\begin{rem} \label{rem5.025}
It is clear that we did not need to consider the infinite-dimensional Hilbert space setting for the above example.   Indeed, if $\| T\|_2 =
\left ((s_1(T))^2 + (s_2(T))^2 \right)^{1/2}$ denotes the Frobenius norm on $\bM_2(\bC)$, and if $\{ e_1, e_2\}$ is an orthonormal basis for $\bC^2$,  then the operators $N$ and $A$ defined as in the above example again show that Theorem~\ref{thm2.04} fails if we consider only polynomials without constant term.   The problem lies in the fact that in the absence of the identity operator, the Frobenius norm on $\bM_2(\bC)$ (and more generally the $\| \cdot \|_p$ norms on $\bM_n(\bC)$ and on $\C_p(\H)$) are not able to distinguish between projections and idempotents.   This shows that both of the norm conditions of Lemma~\ref{projection} are necessary.

Despite this, there is still something we can say, at least when both $A$ are $N$ are invertible and acting on finite-dimensional spaces.
\end{rem}

%
%

\bigskip

\begin{cor} \label{cor5.03}
Let $A\in \bM_m(\bC)$, $N\in \bM_n(\bC)$ be invertible matrices, and $N$ be normal.  Let  $\kappa := \max(m, n)$, $1 \le p < \infty$, and let $\| \cdot \|_p^{[m]}$ \emph{(}resp. $\| \cdot \|_p^{[n]}$\emph{)} denote the $p$-norm on $\bM_m(\bC)$ \emph{(}resp. the $p$-norm on $\bM_n(\bC)$\emph{)}.
Suppose that $\| r(A) \|_p^{[m]} = \| r(N)\|_p^{[n]}$  for all polynomials  $r$ of degree at most $\kappa$ for which $r(0)= 0$.
Then
\begin{enumerate}
	\item $n=m$;
	\item $A$ is also normal; and
	\item $A$ and $N$ are unitarily similar.
\end{enumerate}
\end{cor}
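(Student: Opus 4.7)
The plan is to lift the hypothesis (polynomial isometry on polynomials vanishing at $0$) to a full polynomial isometry on all polynomials of degree at most $\kappa$, and then invoke Theorem~\ref{thm2.04} on the common matrix algebra.

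First, I would show $A$ and $N$ share a common minimal polynomial. Since $N$ is invertible, $\mu_N(0) \neq 0$, and the polynomial $z\mu_N(z)$ vanishes at $0$ with degree $\ell + 1$, where $\ell := |\sigma(N)| = \deg \mu_N$. In the non-degenerate regime $\ell + 1 \leq \kappa$, applying the hypothesis to $z\mu_N(z)$ gives $A\,\mu_N(A) = 0$; invertibility of $A$ forces $\mu_N(A) = 0$, whence $\mu_A \mid \mu_N$ and in particular $\deg \mu_A \leq \ell$. The symmetric argument with $z\mu_A(z)$ (now admissible because $\deg(z\mu_A) \leq \ell + 1 \leq \kappa$) gives $\mu_A(N) = 0$, so $\mu_N \mid \mu_A$. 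Combining, $\mu_A = \mu_N$, and $\sigma(A) = \sigma(N) = \{\mu_1, \ldots, \mu_\ell\}$ consists of $\ell$ distinct nonzero values; both $A$ and $N$ are therefore diagonalisable.

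Next I would construct a polynomial $F$ with $F(0) = 0$, $\deg F \leq \ell \leq \kappa$, $F(N) = I_n$, and $F(A) = I_m$. The natural choice is $F(z) := 1 - \mu_N(z)/\mu_N(0)$; $\mu_N$ annihilating both $N$ and $A$ shows $F$ has the desired properties. For any polynomial $\rho$ of degree at most $\kappa$, the polynomial
\[
p(z) := \rho(z) - \rho(0)\bigl(1 - F(z)\bigr)
\]
vanishes at $0$, has $\deg p \leq \kappa$, and satisfies $p(N) = \rho(N)$ and $p(A) = \rho(A)$. The hypothesis therefore yields $\|\rho(A)\|_p^{[m]} = \|\rho(N)\|_p^{[n]}$ for every polynomial $\rho$ of degree at most $\kappa$, now without any constraint on $\rho(0)$. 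Setting $\rho \equiv 1$ gives $m^{1/p} = \|I_m\|_p^{[m]} = \|I_n\|_p^{[n]} = n^{1/p}$, proving (1) that $n = m$. Claims (2) and (3) then follow from Theorem~\ref{thm2.04} applied on $\bM_n(\bC)$, since the $p$-norm visibly separates projections by rank ($\|P\|_p^p = \operatorname{rank}P$ for every orthogonal projection $P$).

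The main obstacle is the degree bookkeeping in the first step: the inequality $\ell + 1 \leq \kappa$ fails precisely when $\ell = \kappa$, which (since $\ell \leq \min(m,n) \leq \max(m,n) = \kappa$) forces $m = n = \ell$. In this edge case claim (1) is free, but establishing $\mu_A = \mu_N$ requires a separate analysis. Assuming $\deg \mu_A < n$, the polynomial $z\mu_A(z)$ remains admissible and produces $\mu_N \mid \mu_A$, contradicting $\deg \mu_A < n = \deg \mu_N$; hence $\deg \mu_A = n$ and $A$ is diagonalisable with $n$ distinct eigenvalues. Testing the hypothesis against the Lagrange polynomials $f_i(z)$ characterised by $f_i(0) = 0$ and $f_i(\mu_j) = \delta_{ij}$ (each of degree exactly $n \leq \kappa$), together with Weyl's inequality bounding the eigenvalue-modulus sums of the resulting non-normal spectral combinations by their singular-value counterparts, then forces $\sigma(A) \subseteq \sigma(N)$ and hence $\sigma(A) = \sigma(N)$ by a cardinality count, recovering $\mu_A = \mu_N$ and allowing the main line of argument to proceed.
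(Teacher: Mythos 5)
Your main line is essentially the paper's own proof. The paper likewise applies the hypothesis to $z\mu_N(z)$ and $z\mu_A(z)$ to obtain $\mu_A=\mu_N=:\mu$, then uses $\nu(z)=\mu(z)-\mu(0)$ (your $F=1-\mu/\mu(0)$ is just $-\nu/\mu(0)$) to recover $\|r(A)\|_p=\|r(N)\|_p$ for arbitrary $r$ of degree at most $\kappa$, deduces $m=n$ by evaluating at a constant, and finishes with Theorem~\ref{thm2.04} together with the observation that $\|P\|_p^p=\rank P$ for orthogonal projections. So in the regime $\deg\mu_N+1\le\kappa$ your argument and the paper's coincide, and both are correct there.

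The divergence is your edge case $\ell=\deg\mu_N=\kappa$, which you are right to flag -- the paper applies the hypothesis to $z\mu_N(z)$, of degree $\deg\mu_N+1$, without checking that this respects the degree bound $\kappa$ -- but your patch does not hold up. First, you assert $\ell\le\min(m,n)$, whereas a priori one only knows $\deg\mu_N\le n$; in the edge case this gives $\ell=n=\kappa\ge m$, so conclusion (1) is not ``free'' and the possibility $m<n$ still has to be excluded. Second, $\deg\mu_A=n$ makes $A$ nonderogatory, not diagonalisable: a single $n\times n$ Jordan block also has minimal polynomial of full degree, so you cannot yet speak of ``$n$ distinct eigenvalues'' of $A$. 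Third, the concluding step is only gestured at: Weyl's majorant inequality gives $\sum_j |f_i(\lambda_j(A))|^p\le\|f_i(A)\|_p^p=\|f_i(N)\|_p^p$, which is a one-sided bound on eigenvalue data, and no argument is given for why this forces $\sigma(A)\subseteq\sigma(N)$ rather than merely constraining it. As written, the edge case remains a genuine gap in your proposal (one the paper itself silently elides); either supply a real argument there or note that the statement should be read with the degree bound $\kappa+1$, under which your main line already suffices.
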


\begin{proof}
Let $\mu_N, \mu_A$ denote the minimal polynomials of $N$  and of $A$ respectively.   Then $N \mu_N (N) = 0$, and so 
\[
0 = \| N \mu_N(N) \|_p^{[n]} = \|A \mu_N(A) \|_p^{[m]}, \]
whence $A \mu_N(A) = 0$.  Since $A$ is invertible, it follows that $\mu_N(A) = 0$, and so $\mu_A$ divides $\mu_N$.   This part of the argument clearly does not rely upon the fact that $N$ is normal, and so by symmetry, we find that $\mu_N$ divides $\mu_A$ as well.    Without loss of generality, we may therefore assume that $\mu_A = \mu_N$.  We denote this common polynomial by $\mu$.

The fact that both $A$ and $N$ are invertible implies that $\mu(0) \ne 0$.  As such, and keeping in mind that $\mu(A) = \mu(N) = 0$, we set 
\[
\nu(z) = \mu(z) - \mu(0), \ \ \ z \in \bC. \]
Clearly $\nu$ is a polynomial of degree at most $\kappa$ and $\nu(0)= 0$.   By hypothesis, 
\[
\| \mu(0) I_m \|_p^{[m]} = \| \nu(A) \|_p^{[m]} = \| \nu(N)\|_p^{[n]} = \| \mu(0) I_n \|_p^{[n]}. \]
From this it follows that $n=m$, and we simply write $\| \cdot \|_p$ to denote $\| \cdot \|_p^{[n]} = \| \cdot \|_p^{[m]}$.

\bigskip

Let $r(z) = r_0 + r_1 z + r_2 z^2 + \cdots + r_n z^n$ be a polynomial of degree at most $n$.   Set 
\[
q(z) = r_0 \mu(0)^{-1} \nu(z) + (r_1 z + r_2 z^2 + \cdots + r_n z^n), \]
so that $q$ is a polynomial of degree at most $n$ and $q(0) = 0$.   By our hypothesis, 
\begin{align*}
\| r(A) \|_p
	&= \| r_0 I_n + (r_1 A + r_2 A^2 + \cdots r_n A^n ) \|_p \\
	&= \| r_0 (-\mu(0))^{-1} \nu (A) + (r_1 A + r_2 A^2 + \cdots r_n A^n ) \|_p \\
	&= \| q(A) \|_p \\
	&= \| q(N)\|_p \\
	&=  \| r_0 (-\mu(0))^{-1} \nu (N) + (r_1 N + r_2 N^2 + \cdots r_n N^n ) \|_p \\
	&= \| r_0 I_n + (r_1 N + r_2 N^2 + \cdots r_n N^n ) \|_p \\
	&= \| r(N) \|_p.
\end{align*}	
Since $\mu_A = \mu_N$ is a polynomial of degree at most $n$, it follows that if $t$ is any polynomial, then $t(A) = s(A)$ and $t(N) = s(N)$ for some polynomial $s$  of degree at most $n$.  Combining this with the above calculation yields
\[
\| t(A) \|_p = \| t(N) \|_p \ \ \ \mbox{ for all polynomials } t. \]
Since the $p$-norm separates projections by rank, it follows from Theorem~\ref{thm2.04} that $A$ and $N$ are unitarily similar.
\end{proof}


{\bf Acknowledgments.}
This paper was written when the second author was spending his sabbatical leave at the University of Waterloo.  He would like to thank Department of Pure Mathematics of the University of Waterloo, and in particular Professor L.W. Marcoux, for their kind support.



\end{document}